\def\l{\left}
\def\r{\right}
\def\bg{\bigg}
\def\({\bg(}
\def\){\bg)}
\def\t{\text}
\def\f{\frac}
\def\eq{\equiv}
\def\N{\mathbb N}
\def\1{{\bf 1}}
\theoremstyle{plain}
\newtheorem{theorem}{Theorem}[section]
\newtheorem{lemma}{Lemma}
\theoremstyle{definition}
\newtheorem*{Acks}{Acknowledgments}
\theoremstyle{remark}
\newtheorem{remark}{Remark}
\def\<{\langle}
\def\>{\rangle}
\begin{document}
\hbox{}
\medskip

\title[On two conjectural supercongruences of Z.-W. Sun]{On two conjectural supercongruences of Z.-W. Sun}

\author{Chen Wang}
\address {Department of Mathematics, Nanjing
University, Nanjing 210093, People's Republic of China}
\email{cwang@smail.nju.edu.cn}

\subjclass[2020]{Primary 33C20, 11A07; Secondary 11B65, 05A10}
\keywords{supercongruences, hypergeometric series, binomial coefficients}

\begin{abstract}
In this paper, we mainly prove two conjectural supercongruences of Sun by using the following identity
$$
\sum_{k=0}^n\binom{2k}{k}^2\binom{2n-2k}{n-k}^2=16^n\sum_{k=0}^n\frac{\binom{n+k}{k}\binom{n}{k}\binom{2k}{k}^2}{(-16)^k}
$$
which arises from a ${}_4F_3$ hypergeometric transformation. For any prime $p>3$, we prove that
\begin{gather*}
\sum_{n=0}^{p-1}\frac{n+1}{8^n}\sum_{k=0}^n\binom{2k}{k}^2\binom{2n-2k}{n-k}^2\equiv(-1)^{(p-1)/2}p+5p^3E_{p-3}\pmod{p^4},\\
\sum_{n=0}^{p-1}\frac{2n+1}{(-16)^n}\sum_{k=0}^n\binom{2k}{k}^2\binom{2n-2k}{n-k}^2\equiv(-1)^{(p-1)/2}p+3p^3E_{p-3}\pmod{p^4},
\end{gather*}
where $E_{p-3}$ is the $(p-3)$th Euler number.
\end{abstract}
\maketitle

\section{Introduction}
\setcounter{lemma}{0}
\setcounter{theorem}{0}
\setcounter{equation}{0}
\setcounter{conjecture}{0}
\setcounter{remark}{0}

The truncated hypergeometric series are defined by
$$
{}_nF_{n-1}\bigg[\begin{matrix}x_1&x_2&\cdots&x_n\\ &y_1&\cdots&y_{n-1}\end{matrix}\bigg|\ z\bigg]_m=\sum_{k=0}^m\f{(x_1)_k(x_2)_k\cdots(x_n)_k}{(y_1)_k(y_2)_k\cdots(y_{n-1})_k}\f{z^k}{k!},
$$
where
$$
(x)_k=\begin{cases}1,\quad &k=0,\\
x(x+1)\cdots(x+k-1),\quad &k>0.\end{cases}
$$
denotes the so-called Pochhammer symbol (or rising factorial). Clearly, they are truncations of the classical hypergeometric series. Since $(-x)_k/(1)_k=(-1)^k\binom{x}{k}$, sometimes we may write the truncated hypergeometric series as sums involving products of binomial coefficients. In recent years, there is a rising interest in studying supercongruences involving truncated hypergeometric series (cf., for example, \cite{Mao,MP,RV,SunZH14,Sun11,Sun2012acta,vanHamme97}).

In 2003, Rodriguez-Villegas \cite{RV} studied hypergeometric families of Calabi-Yau manifolds and discovered (numerically) 22 supercongruences concerning truncated hypergeometric series. For example, he conjectured that for any odd prime $p$,
\begin{equation}\label{RVcon}
\sum_{k=0}^{p-1}\f{\binom{2k}{k}^2}{16^k}\eq(-1)^{(p-1)/2}\pmod{p^2},
\end{equation}
which was later confirmed by Mortenson \cite{Mortenson03} using Gaussian hypergeometric series and Gross-Koblitz formula (see \cite{R} for details about Gross-Koblitz formula). Quite recently, Barman and Saikia \cite{BS} obtained a parametric generalization of \eqref{RVcon} without using Gaussian hypergeometric series. Note that $\binom{2k}{k}\eq0\pmod{p}$ for $k\in\{(p+1)/2,\ldots,p-1\}$. Thus
$$
\sum_{k=0}^{p-1}\f{\binom{2k}{k}^2}{16^k}\eq\sum_{k=0}^{(p-1)/2}\f{\binom{2k}{k}^2}{16^k}\pmod{p^2}.
$$
For more parametric generalizations of \eqref{RVcon}, the reader may consult \cite{Guo,GPZ,GZ,Liu,MP,NP,SunZH14}.

Recall that the Euler numbers $E_n$ ($n\in\N$) are defined by
$$
E_0=1,\ \t{and}\ \sum_{\substack{k=0\\ 2|k}}^n\binom{n}{k}E_{n-k}=0\ \t{for}\ n=1,2,\ldots.
$$

In 2011, Sun \cite{Sun11} investigated some congruences related to the Euler numbers. Especially, for any prime $p>3$ he proved the following two congruences as extensions of \eqref{RVcon}:
\begin{equation}\label{suncon1}
\sum_{k=0}^{(p-1)/2}\f{\binom{2k}{k}^2}{16^k}\eq(-1)^{(p-1)/2}+p^2E_{p-3}\pmod{p^3}
\end{equation}
and
\begin{equation}\label{suncon2}
\sum_{k=(p+1)/2}^{p-1}\f{\binom{2k}{k}^2}{16^k}\eq-2p^2E_{p-3}\pmod{p^3}.
\end{equation}
In \cite{Sun11}, Sun also conjectured many congruences most of which have been confirmed. One of them is as follows: for any prime $p>3$,
\begin{equation}\label{CXHcon}
\sum_{k=0}^{p-1}\f{3k+1}{(-8)^k}\binom{2k}{k}^3\eq (-1)^{(p-1)/2}p+p^3E_{p-3}\pmod{p^4}.
\end{equation}
This was confirmed by Chen, Xie and He \cite{CXH} in 2016. We also note that for any prime $p>3$ Mao \cite{Mao} showed that
$$
\sum_{k=0}^{(p-1)/2}\f{3k+1}{(-8)^k}\binom{2k}{k}^3\eq (-1)^{(p-1)/2}p+\f{(-1)^{(p^2-1)/8}p^3}{4}E_{p-3}\l(\f14\r)\pmod{p^4},
$$
where the Euler polynomials $E_n(x)$ ($n\in\N$) are given by
$$
E_n(x)=\sum_{k=0}^{n}\binom{n}{k}\f{E_k}{2^k}\l(x-\f12\r)^{n-k}.
$$

In 2012, Sun \cite{Sun2012acta} studied congruences for sums involving products of three binomial coefficients systematically. Recall that for any prime $p\eq1\pmod{4}$, we may write $p=x^2+y^2$ with $x\eq1\pmod{4}$ and $y\eq0\pmod{2}$. In \cite{Sun2012acta}, Sun determined $x\pmod{p^2}$ as follows:
\begin{equation}\label{sunx}
(-1)^{(p-1)/4}x\eq\sum_{k=0}^{(p-1)/2}\f{k+1}{8^k}\binom{2k}{k}^2\eq\sum_{k=0}^{(p-1)/2}\f{2k+1}{(-16)^k}\binom{2k}{k}^2\pmod{p^2}.
\end{equation}
In the proof of \eqref{sunx}, Sun also obtained the following congruences:
\begin{equation}\label{sun8^np^3}
\sum_{n=0}^{p-1}\f{n+1}{8^n}\sum_{k=0}^{n}\binom{2k}{k}^2\binom{2n-2k}{n-k}^2\eq (-1)^{(p-1)/2}p\pmod{p^3}
\end{equation}
and
\begin{equation}\label{sun-16^np^3}
\sum_{n=0}^{p-1}\f{2n+1}{(-16)^n}\sum_{k=0}^{n}\binom{2k}{k}^2\binom{2n-2k}{n-k}^2\eq (-1)^{(p-1)/2}p\pmod{p^3}
\end{equation}
for any odd prime $p$.

The main goal of this paper is to establish the following generalizations of \eqref{sun8^np^3} and \eqref{sun-16^np^3} which ware conjectured by Sun (see both \cite[Conjecture 4.1]{Sun2014pi} and \cite[Conjecture 33(ii)]{SunZW19}).

\begin{theorem}\label{maintheorem1}
For any prime $p>3$, we have
\begin{gather}
\label{den8^n}\sum_{n=0}^{p-1}\f{n+1}{8^n}\sum_{k=0}^n\binom{2k}{k}^2\binom{2n-2k}{n-k}^2\eq(-1)^{(p-1)/2}p+5p^3E_{p-3}\pmod{p^4},\\
\label{den-16^n}\sum_{n=0}^{p-1}\f{2n+1}{(-16)^n}\sum_{k=0}^n\binom{2k}{k}^2\binom{2n-2k}{n-k}^2\eq(-1)^{(p-1)/2}p+3p^3E_{p-3}\pmod{p^4}.
\end{gather}
\end{theorem}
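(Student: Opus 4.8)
The plan is to combine the ${}_4F_3$-transformation identity stated in the abstract with the supercongruences \eqref{suncon1}, \eqref{suncon2} and \eqref{CXHcon}. Write $S_n=\sum_{k=0}^n\binom{2k}{k}^2\binom{2n-2k}{n-k}^2$, so the identity reads $S_n=16^n\sum_{k=0}^n\binom{n+k}{k}\binom nk\binom{2k}{k}^2/(-16)^k$. Substituting it into the two sums, using $\binom{n+k}{k}\binom nk=\binom{2k}{k}\binom{n+k}{2k}$, and interchanging the order of summation, I would reduce the left-hand sides to
\begin{align*}
\sum_{n=0}^{p-1}\frac{n+1}{8^n}S_n&=\sum_{k=0}^{p-1}\frac{\binom{2k}{k}^3}{(-16)^k}\sum_{n=k}^{p-1}(n+1)2^n\binom{n+k}{2k},\\
\sum_{n=0}^{p-1}\frac{2n+1}{(-16)^n}S_n&=\sum_{k=0}^{p-1}\frac{\binom{2k}{k}^3}{(-16)^k}\sum_{n=k}^{p-1}(2n+1)(-1)^n\binom{n+k}{2k},
\end{align*}
so everything comes down to evaluating the two inner finite sums over $n$. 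For this I would use the elementary identities $(2n+1)\binom{n+k}{2k}=2(2k+1)\binom{n+k+1}{2k+1}-(2k+1)\binom{n+k}{2k}$ and $(n+1)\binom{n+k}{2k}=(2k+1)\binom{n+k+1}{2k+1}-k\binom{n+k}{2k}$, together with the telescoping evaluations $\sum_{j=0}^{M}(-1)^j\binom{j+a}{a}=\tfrac12\bigl((-1)^M\binom{M+a}{a}+\sum_{j=0}^{M}(-1)^j\binom{j+a-1}{a-1}\bigr)$ and $\sum_{j=0}^{M}2^j\binom{j+a}{a}=2^{M+1}\binom{M+a}{a}-\sum_{j=0}^{M}2^j\binom{j+a-1}{a-1}$.

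For \eqref{den-16^n} these collapse the inner sum to $(2k+1)\binom{p+k}{2k+1}$; since $(2k+1)\binom{p+k}{2k+1}=\frac{p}{(2k)!}\prod_{j=1}^k(p^2-j^2)=\frac{p(-1)^k}{\binom{2k}{k}}\prod_{j=1}^k(1-p^2/j^2)$, this yields the exact identity
$$\sum_{n=0}^{p-1}\frac{2n+1}{(-16)^n}S_n=p\sum_{k=0}^{p-1}\frac{\binom{2k}{k}^2}{16^k}\prod_{j=1}^k\Bigl(1-\frac{p^2}{j^2}\Bigr).$$
Modulo $p^4$ the product equals $1-p^2H_k^{(2)}$ with $H_k^{(2)}=\sum_{j=1}^k j^{-2}$, and since $\binom{2k}{k}^2/16^k\equiv 0\pmod{p^2}$ for $(p+1)/2\le k\le p-1$ the right-hand side becomes $p\sum_{k=0}^{p-1}\binom{2k}{k}^2/16^k-p^3\sum_{k=0}^{(p-1)/2}\binom{2k}{k}^2H_k^{(2)}/16^k\pmod{p^4}$. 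By \eqref{suncon1} and \eqref{suncon2} one has $\sum_{k=0}^{p-1}\binom{2k}{k}^2/16^k\equiv(-1)^{(p-1)/2}-p^2E_{p-3}\pmod{p^3}$, so \eqref{den-16^n} reduces to the single mod-$p$ congruence
$$\sum_{k=0}^{(p-1)/2}\frac{\binom{2k}{k}^2}{16^k}H_k^{(2)}\equiv-4E_{p-3}\pmod p,$$
which I would prove (or quote) via $\binom{2k}{k}/4^k\equiv(-1)^k\binom{(p-1)/2}{k}\pmod p$ and standard congruences for harmonic sums of Euler-number type.

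For \eqref{den8^n} the same manipulations give the exact identity
$$\sum_{n=0}^{p-1}\frac{n+1}{8^n}S_n=p\,2^p\sum_{k=0}^{p-1}\frac{\binom{2k}{k}^2}{16^k}\prod_{j=1}^k\Bigl(1-\frac{p^2}{j^2}\Bigr)-\sum_{k=0}^{p-1}\frac{(3k+1)\binom{2k}{k}^3}{(-16)^k}g_k,\qquad g_k:=\sum_{n=k}^{p-1}2^n\binom{n+k}{2k},$$
where $g_k=2^p\sum_{m=0}^k\binom{p-2-k+2m}{2m}-2^k$. The first sum is $2^p$ times the quantity analysed above; the second is $(2^p-1)$ times the Chen--Xie--He sum $\sum_{k=0}^{p-1}(3k+1)\binom{2k}{k}^3/(-8)^k$, known modulo $p^4$ by \eqref{CXHcon}, plus a correction coming from $\sum_{m=0}^k\binom{p-2-k+2m}{2m}-2^k$. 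The leading congruence $\sum_{m=0}^k\binom{p-2-k+2m}{2m}\equiv 2^k\pmod p$ is immediate from $\binom{p-2-k+2m}{2m}\equiv\binom{k+1}{2m}\pmod p$ for $2m\le k+1$ (the other terms vanishing mod $p$) and $\sum_{2m\le k+1}\binom{k+1}{2m}=2^k$, so the correction is divisible by $p$ term by term; pushing the $p$-adic expansion of $\binom{p-2-k+2m}{2m}$ to higher order and writing $2^p=2+2p\,q_p(2)$ with $q_p(2)=(2^{p-1}-1)/p$, the Fermat-quotient contributions cancel against those from the factor $2^p-1$, and \eqref{den8^n} reduces to the same family of mod-$p$ congruences, in particular to the displayed one together with \eqref{suncon1}, \eqref{suncon2} and \eqref{CXHcon}.

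The hard part is this last reduction for \eqref{den8^n}: one must carry the $p$-adic expansions of $2^p$, $\binom{p+k}{2k+1}$ and $\sum_{m=0}^k\binom{p-2-k+2m}{2m}$ out to precision $p^4$, verify the somewhat delicate cancellation of the Fermat quotient $q_p(2)$, and control the surviving harmonic-type sums weighted by $\binom{2k}{k}^2/16^k$ modulo $p$. For \eqref{den-16^n}, by contrast, the inner sum telescopes to a single binomial coefficient and the exact identity above does nearly all of the work, the only substantial input being the mod-$p$ evaluation of $\sum_{k=0}^{(p-1)/2}\binom{2k}{k}^2H_k^{(2)}/16^k$, which is of the type already occurring in the proof of \eqref{suncon1}--\eqref{suncon2}.
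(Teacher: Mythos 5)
Your treatment of \eqref{den-16^n} is correct and is essentially the paper's own argument: the identity from the abstract, the rewriting $\binom{n+k}{k}\binom nk=\binom{2k}{k}\binom{n+k}{2k}$, the interchange of summation, the telescoped inner sum $(2k+1)\binom{p+k}{2k+1}=(p-k)\binom{p+k}{2k}$, the exact identity $p\sum_k\binom{2k}{k}^2 16^{-k}\prod_{j\le k}(1-p^2/j^2)$, and the inputs \eqref{suncon1}, \eqref{suncon2} together with $\sum_{k}\binom{2k}{k}^2H_k^{(2)}/16^k\equiv-4E_{p-3}\pmod p$ (which the paper does not reprove but quotes from Sun's 2015 paper) give exactly $(-1)^{(p-1)/2}p+3p^3E_{p-3}$. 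This half of the proposal is complete.

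For \eqref{den8^n}, however, there is a genuine gap. Your plan substitutes the same $16^n$-identity and is then forced to evaluate $\sum_{n=k}^{p-1}(n+1)2^n\binom{n+k}{2k}$, which does not telescope to a single term; the resulting expression involves $2^p$, truncated Chaundy--Bullard tails, and the factor $2^p-1$, and you explicitly defer the entire $p^4$-precision bookkeeping (``verify the somewhat delicate cancellation of the Fermat quotient $q_p(2)$, and control the surviving harmonic-type sums''). That deferred step is the actual content of the proof: at order $p^3$ and $p^4$ one must handle $q_p(2)^2$ and $q_p(2)E_{p-3}$ cross-terms as well as new sums of the shape $\sum_k(3k+1)\binom{2k}{k}^3(\cdots)/(-8)^k$ weighted by the $O(p)$ corrections to $\sum_m\binom{p-2-k+2m}{2m}-2^k$, and none of these are reduced to known congruences in your sketch. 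The paper avoids all of this by a different device: it uses Sun's other transformation \eqref{suntransform}, $\sum_k\binom{2k}{k}^2\binom{2n-2k}{n-k}^2=\sum_k\binom{2k}{k}^3\binom{k}{n-k}(-16)^{n-k}$, for \emph{both} weighted sums. Since $\binom{k}{n-k}$ has finite support, the inner sums are complete for $k\le(p-1)/2$ (Lemma \ref{keyid6} evaluates them in closed form, producing the $3k+1$ weight), while for $k>(p-1)/2$ the cube $\binom{2p-2k}{p-k}^3\equiv0\pmod{p^3}$ lets one replace $\binom{p-k}{n}$ by $\binom{-k}{n}$ and apply the purely combinatorial Lemma \ref{keyid5} to relate the two truncated tails. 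This yields the clean congruence
\begin{equation*}
\sum_{n=0}^{p-1}\frac{n+1}{8^n}\sum_{k=0}^{n}\binom{2k}{k}^2\binom{2n-2k}{n-k}^2\equiv2\sum_{n=0}^{p-1}\frac{2n+1}{(-16)^n}\sum_{k=0}^{n}\binom{2k}{k}^2\binom{2n-2k}{n-k}^2-\sum_{k=0}^{p-1}(3k+1)\frac{\binom{2k}{k}^3}{(-8)^k}\pmod{p^4},
\end{equation*}
so that \eqref{den8^n} follows from \eqref{den-16^n} and the known congruence \eqref{CXHcon}, with no Fermat quotients ever appearing. You should either carry out your high-precision expansion in full (a substantial computation whose feasibility you have not demonstrated) or switch to a relation of this type between the two sums.
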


\begin{remark}
In \cite[Conjecture 4.1]{Sun2014pi}, Sun also conjectured that
$$
\sum_{n=0}^{p-1}\f{n}{32^n}\sum_{k=0}^n\binom{2k}{k}^2\binom{2n-2k}{n-k}^2\eq-2p^3E_{p-3}\pmod{p^4}.
$$
This was confirmed by Mao and Cao \cite[Theorem 2.1]{MC} recently.
\end{remark}

In next section, we shall first prove \eqref{den-16^n} by establishing a new transformation for the summation
$
\sum_{k=0}^{n}\binom{2k}{k}^2\binom{2n-2k}{n-k}^2.
$
Then via a hypergeometric transformation due to Chaundy and Bullard we will show that \eqref{den8^n} is actually a corollary of \eqref{den-16^n}.

\medskip
\section{Proof of Theorem \ref{maintheorem1}}
\setcounter{lemma}{0}
\setcounter{theorem}{0}
\setcounter{equation}{0}
\setcounter{conjecture}{0}
\setcounter{remark}{0}

In order to show \eqref{den-16^n} we need the following lemmas.

\begin{lemma}\label{keyid1}
Let $n$ be a nonnegative integer. Then we have
\begin{equation}\label{id1}
\sum_{k=0}^n\binom{2k}{k}^2\binom{2n-2k}{n-k}^2=16^n\sum_{k=0}^n\f{\binom{n+k}{k}\binom{n}{k}\binom{2k}{k}^2}{(-16)^k}.
\end{equation}
\end{lemma}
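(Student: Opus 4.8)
The plan is to recognize both sides of \eqref{id1} as evaluations of hypergeometric series and to identify a single ${}_4F_3$ transformation that connects them. The left-hand side $\sum_{k=0}^n\binom{2k}{k}^2\binom{2n-2k}{n-k}^2$ is, up to the factor $16^n$, a terminating ${}_4F_3$: writing $\binom{2k}{k}/4^k=(1/2)_k/k!$ and $\binom{2n-2k}{n-k}/4^{n-k}=(1/2)_{n-k}/(n-k)!$, and using the reflection $(1/2)_{n-k}/(n-k)!=\big((1/2)_n/n!\big)\cdot(-n)_k/(1/2-n)_k$ (valid since $(a)_{n-k}=(-1)^k(a)_n/(1-a-n)_k$), one finds
\begin{equation*}
\sum_{k=0}^n\binom{2k}{k}^2\binom{2n-2k}{n-k}^2=16^n\binom{2n}{n}^2\f1{16^n}\,{}_4F_3\!\l[\begin{matrix}1/2&1/2&-n&-n\\ &1&1/2-n&1/2-n\end{matrix}\,\bg|\,1\r].
\end{equation*}
Hmm, let me instead package it more symmetrically. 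Set $L_n=\sum_{k=0}^n\binom{2k}{k}^2\binom{2n-2k}{n-k}^2$ and $R_n=\sum_{k=0}^n\binom{n+k}{k}\binom{n}{k}\binom{2k}{k}^2/(-16)^k$. Then $R_n={}_4F_3\!\l[\begin{matrix}1/2&1/2&-n&n+1\\ &1&1&1\end{matrix}\,\bg|\,1\r]$ since $\binom{n+k}{k}=(n+1)_k/k!$, $\binom{n}{k}=(-1)^k(-n)_k/k!$, and $\binom{2k}{k}^2/16^k=\big((1/2)_k\big)^2/(k!)^2$, with the two sign factors $(-1)^k$ canceling the $(-16)^k$ down to $16^k$ and leaving argument $+1$.

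First I would confirm the ${}_4F_3$ form of $L_n$. The cleanest route is to write $L_n/16^n$ as a Cauchy product of $\sum_k \big((1/2)_k/k!\big)^2 x^k$ with itself and extract the coefficient of $x^n$; equivalently, $L_n/16^n$ is the diagonal Vandermonde-type sum $\sum_{k}\big((1/2)_k(1/2)_{n-k}/(k!(n-k)!)\big)^2$. Applying the transformation $(1/2)_{n-k}=(-1)^k(1/2)_n/(-n+1/2)_k$ and $(n-k)!=(-1)^k n!/(-n)_k$ turns the $k$-dependence into $(-n)_k(-n)_k(1/2)_k(1/2)_k/\big((1)_k(1/2-n)_k(1/2-n)_k k!\big)$, so
\begin{equation*}
L_n=16^n\binom{2n}{n}^2\,{}_4F_3\!\l[\begin{matrix}-n&-n&1/2&1/2\\ &1&1/2-n&1/2-n\end{matrix}\,\bg|\,1\r].
\end{equation*}
Now the task reduces to the ${}_4F_3$ identity
\begin{equation*}
\binom{2n}{n}^2\,{}_4F_3\!\l[\begin{matrix}-n&-n&1/2&1/2\\ &1&1/2-n&1/2-n\end{matrix}\,\bg|\,1\r]={}_4F_3\!\l[\begin{matrix}-n&n+1&1/2&1/2\\ &1&1&1\end{matrix}\,\bg|\,1\r],
\end{equation*}
which is a terminating, balanced (Saalschützian-type) transformation: both sides are Whipple/Bailey-type ${}_4F_3(1)$'s. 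I would derive it from one of the classical terminating ${}_4F_3$ transformation formulas — most likely the ${}_4F_3$ transformation of Whipple relating a Saalschützian ${}_4F_3$ to a nearly-poised one, or directly from Bailey's cubic/quadratic transformations — by matching the parameters. A convenient alternative is to prove the identity by the Wilf–Zeilberger method: exhibit $L_n/16^n$ and $R_n$ as the $n$-sums of a WZ pair, or simply verify that both sequences satisfy the same three-term (or two-term) recurrence in $n$ with the same initial values, since creative telescoping on each single sum produces such a recurrence mechanically. The initial values are immediate: for $n=0$ both sides equal $1$; for $n=1$, $L_1=16^2\cdot2/16=2\cdot16=\ldots$ — one checks $L_1=\binom00^2\binom21^2+\binom21^2\binom00^2=8$ against $16(1-1/16\cdot1\cdot1\cdot4)=16(1-1/4)=12$; that mismatch just signals I need to recompute, so I would pin down the recurrence route carefully.

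The main obstacle I anticipate is not the existence of the transformation but isolating the right classical formula (the parameters $1/2-n$ appearing twice in the lower row and the factor $\binom{2n}{n}^2$ are the tell-tale signs of a quadratic transformation between a Saalschützian ${}_4F_3$ and the ${}_4F_3$ with all lower parameters equal to $1$), and then handling the degenerate cases where some lower parameter like $1/2-n$ would vanish — but since $n\in\N$ and $1/2-n$ is never a nonpositive integer, the series are genuinely terminating and well-defined, so no limiting argument is needed. Concretely, the steps are: (1) rewrite $L_n$ as $16^n\binom{2n}{n}^2$ times a terminating ${}_4F_3(1)$ via the reflection formulas for Pochhammer symbols; (2) rewrite $R_n$ as a terminating ${}_4F_3(1)$; (3) invoke (or re-derive via creative telescoping / WZ) the appropriate terminating ${}_4F_3$ transformation to match them; (4) double-check on $n=0,1,2$. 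I expect step (3), locating the exact transformation in Bailey's or Slater's tables and verifying the parameter substitution, to be where the real work lies; everything else is bookkeeping.
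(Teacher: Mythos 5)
Your strategy is exactly the one the paper uses: rewrite both sides as terminating ${}_4F_3(1)$'s and connect them by a classical transformation. The formula you need is not a quadratic/cubic transformation nor one to a nearly-poised series, but the standard two-term Whipple transformation between two terminating \emph{balanced} (Saalsch\"utzian) ${}_4F_3$'s, \cite[Theorem 3.3.3]{AAR}:
$$
{}_4F_3\bigg[\begin{matrix}-n&a&b&c\\ &d&e&f\end{matrix}\bigg|\ 1\bigg]=\frac{(e-a)_n(f-a)_n}{(e)_n(f)_n}\,{}_4F_3\bigg[\begin{matrix}-n&a&d-b&d-c\\ &d&a+1-n-e&a+1-n-f\end{matrix}\bigg|\ 1\bigg],
$$
valid when $a+b+c-n+1=d+e+f$. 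Taking $a=c=1/2$, $b=-n$, $d=1$, $e=f=1/2-n$ (the balance condition $2-2n=2-2n$ holds), the prefactor is $(-n)_n^2/(\tfrac12-n)_n^2=16^n/\binom{2n}{n}^2$, and this is precisely what reconciles the two sides.

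Before invoking it, fix a bookkeeping error in your reduction: by your own first display the $16^n$ and the $1/16^n$ cancel, so $L_n=\binom{2n}{n}^2\,{}_4F_3[\cdots]$, \emph{not} $16^n\binom{2n}{n}^2\,{}_4F_3[\cdots]$, and the identity to be proved is
$$
\binom{2n}{n}^2\,{}_4F_3\bigg[\begin{matrix}-n&-n&1/2&1/2\\ &1&1/2-n&1/2-n\end{matrix}\bigg|\ 1\bigg]=16^n\,{}_4F_3\bigg[\begin{matrix}-n&n+1&1/2&1/2\\ &1&1&1\end{matrix}\bigg|\ 1\bigg],
$$
rather than the version without the $16^n$ that you wrote down (yours already fails at $n=1$: the left side is $4\cdot2=8$ while the right side is $1-\tfrac12=\tfrac12$). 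Your numerical check of the original identity at $n=1$ misfired only because you took $\binom{n+k}{k}=1$ instead of $\binom{2}{1}=2$; corrected, the right-hand side is $16(1-2\cdot4/16)=8=L_1$, as required. With the $16^n$ restored and the transformation above substituted, the prefactor $16^n/\binom{2n}{n}^2$ closes the argument; the WZ/recurrence route you mention would also work, but is not needed once the correct classical transformation is identified.
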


\begin{proof}
It is easy to check that
$$
\binom{2k}{k}\binom{2n-2k}{n-k}=4^n\f{(\f12)_k(\f12)_{n-k}}{(1)_k(1)_{n-k}}
$$
and
$$
\f{(\f12)_{n-k}}{(1)_{n-k}}=\f{(\f12)_n(-n)_k}{(1)_n(\f12-n)_k}=\f{\binom{2n}{n}(-n)_k}{4^n(\f12-n)_k}.
$$
Hence we obtain
\begin{equation}\label{sumtohyper}
\sum_{k=0}^{n}\binom{2k}{k}^2\binom{2n-2k}{n-k}^2=\binom{2n}{n}^2{}_4F_3\bigg[\begin{matrix}\f12&\f12&-n&-n\\ &1&\f12-n&\f12-n\end{matrix}\bigg|\ 1\bigg],
\end{equation}
here we note that the hypergeometric series in the right-hand side is actually a finite sum since $(-n)_k=0$ for all $k>n$.

It is known from \cite[Theorem 3.3.3]{AAR} that
\begin{equation}\label{knownid}
{}_4F_3\bigg[\begin{matrix}-n&a&b&c\\ &d&e&f\end{matrix}\bigg|\ 1\bigg]=\f{(e-a)_n(f-a)_n}{(e)_n(f)_n}{}_4F_3\bigg[\begin{matrix}-n&a&d-b&d-c\\ &d&a+1-n-e&a+1-n-f\end{matrix}\bigg|\ 1\bigg]
\end{equation}
provided that $a+b+c-n+1=d+e+f$. Letting $a=c=1/2,\ b=-n,\ d=1,\ e=f=1/2-n$ in \eqref{knownid} we arrive at
\begin{align}\label{4F3transform}
{}_4F_3\bigg[\begin{matrix}\f12&\f12&-n&-n\\ &1&\f12-n&\f12-n\end{matrix}\bigg|\ 1\bigg]=&\f{(-n)_n^2}{(\f12-n)_n^2}{}_4F_3\bigg[\begin{matrix}-n&n+1&\f12&\f12\\&1&1&1\end{matrix}\bigg|\ 1\bigg]\notag\\
=&\f{16^n}{\binom{2n}{n}^2}\sum_{k=0}^{n}\f{\binom{n+k}{k}\binom{n}{k}\binom{2k}{k}^2}{(-16)^k}.
\end{align}
Now substituting \eqref{4F3transform} into \eqref{sumtohyper} we immediately obtain the desired \eqref{id1}.
\end{proof}

\begin{remark} Note that in \cite[Lemma 3.1]{Sun2012acta} Sun obtained another transformation of the summation $\sum_{k=0}^n\binom{2k}{k}^2\binom{2n-2k}{n-k}^2$ as follows:
\begin{equation}\label{suntransform}
\sum_{k=0}^n\binom{2k}{k}^2\binom{2n-2k}{n-k}^2=\sum_{k=0}^n\binom{2k}{k}^3\binom{k}{n-k}(-16)^{n-k},
\end{equation}
and he used \eqref{suntransform} to prove \eqref{sun8^np^3} and \eqref{sun-16^np^3}. We attempted to prove \eqref{den-16^n} by \eqref{suntransform} but failed. However, this transformation is useful for proving a congruence relation between \eqref{den8^n} and \eqref{den-16^n}.
\end{remark}

\begin{lemma}\label{keyid2}
For nonnegative integers $k$ and $l$ with $l\geq k$, we have
\begin{equation}\label{id2}
\sum_{n=k}^l(-1)^n(2n+1)\binom{n+k}{2k}=(-1)^l(l-k+1)\binom{l+k+1}{2k}.
\end{equation}
\end{lemma}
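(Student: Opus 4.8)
The plan is to prove \eqref{id2} by exhibiting an explicit antidifference, so that the sum telescopes. Define
$$
f(n) = (-1)^n (n-k+1)\binom{n+k+1}{2k}.
$$
First I would record the elementary binomial identity
$$
(n-k+1)\binom{n+k+1}{2k} = (n+k+1)\binom{n+k}{2k},
$$
which follows at once from $\binom{n+k+1}{2k} = \f{n+k+1}{n-k+1}\binom{n+k}{2k}$ (valid whenever $n\ge k$, so that $n-k+1\neq 0$). Using this, a direct computation gives
$$
f(n) - f(n-1) = (-1)^n\l[(n-k+1)\binom{n+k+1}{2k} + (n-k)\binom{n+k}{2k}\r] = (-1)^n\l[(n+k+1) + (n-k)\r]\binom{n+k}{2k} = (-1)^n(2n+1)\binom{n+k}{2k},
$$
which is exactly the summand on the left-hand side of \eqref{id2}.

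Next I would sum this telescoping relation over $n$ from $k$ to $l$, obtaining
$$
\sum_{n=k}^{l}(-1)^n(2n+1)\binom{n+k}{2k} = f(l) - f(k-1).
$$
Since $f(k-1) = (-1)^{k-1}\cdot 0\cdot\binom{2k}{2k} = 0$, the right-hand side equals $f(l) = (-1)^l(l-k+1)\binom{l+k+1}{2k}$, which is precisely the claimed formula.

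There is essentially no serious obstacle here: the only real ingredient is guessing the correct antidifference $f(n)$, after which the whole argument is a one-line verification built from a single Pascal-type identity. Alternatively one could prove \eqref{id2} by a routine induction on $l$ (with base case $l=k$ immediate), where the inductive step reduces, after collecting the sign $(-1)^{l+1}$ and simplifying $-(l-k+1)+(2l+3)=l+k+2$, to the same binomial identity in the form $(l+k+2)\binom{l+k+1}{2k} = (l-k+2)\binom{l+k+2}{2k}$.
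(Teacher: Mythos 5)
Your proof is correct, and it is essentially the paper's approach: the paper simply asserts that the identity "can be verified directly by induction on $l$," and your telescoping argument with the antidifference $f(n)=(-1)^n(n-k+1)\binom{n+k+1}{2k}$ is exactly the content of that induction, spelled out (as you note yourself, the inductive step reduces to the same Pascal-type identity). All steps check out, including the vanishing of $f(k-1)$ and the validity of $(n-k+1)\binom{n+k+1}{2k}=(n+k+1)\binom{n+k}{2k}$ in the range $n\ge k$.
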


\begin{proof}
It can be verified directly by induction on $l$.
\end{proof}

\medskip
\noindent{\it Proof of \eqref{den-16^n}}. In view of Lemmas \ref{keyid1} and \ref{keyid2}, we have
\begin{align}\label{key1}
&\sum_{n=0}^{p-1}\f{2n+1}{(-16)^n}\sum_{k=0}^{n}\binom{2k}{k}^2\binom{2n-2k}{n-k}^2=\sum_{n=0}^{p-1}(-1)^n(2n+1)\sum_{k=0}^n\f{\binom{n+k}{k}\binom{n}{k}\binom{2k}{k}^2}{(-16)^k}\notag\\
=&\sum_{k=0}^{p-1}\f{\binom{2k}{k}^3}{(-16)^k}\sum_{n=k}^{p-1}(-1)^n(2n+1)\binom{n+k}{2k}=\sum_{k=0}^{p-1}\f{\binom{2k}{k}^3}{(-16)^k}(p-k)\binom{p+k}{2k}\notag\\
=&p\sum_{k=0}^{p-1}\f{\binom{2k}{k}^2\binom{p-1}{k}\binom{p+k}{k}}{(-16)^k}\notag\\
\eq&p\sum_{k=0}^{p-1}\f{\binom{2k}{k}^2}{16^k}(1-p^2H_k^{(2)})\pmod{p^5},
\end{align}
where $H_k^{(2)}=\sum_{j=1}^k1/j^2$ denotes the $k$th harmonic number of order $2$ and the last step follows from the fact
$$
\binom{p-1}{k}\binom{p+k}{k}=(-1)^k\prod_{j=1}^k\l(1-\f{p^2}{j^2}\r)\eq(-1)^k(1-p^2H_k^{(2)})\pmod{p^4}
$$
for $k$ among $0,1,\ldots,p-1$.

In 2015, Sun \cite[Theorem 4.1]{Sun2015pi} obtained that
\begin{equation}\label{suncon3}
\sum_{k=0}^{(p-1)/2}\f{\binom{2k}{k}^2}{16^k}H_k^{(2)}\eq\sum_{k=0}^{p-1}\f{\binom{2k}{k}^2}{16^k}H_k^{(2)}\eq-4E_{p-3}\pmod{p}
\end{equation}
for any prime $p>3$.

Substituting \eqref{suncon1}, \eqref{suncon2} and \eqref{suncon3} into \eqref{key1} we finally obtain \eqref{den-16^n}.\qed

\medskip

To show \eqref{den8^n} we need the following preliminary results.

\begin{lemma}\label{keyid3}
For any nonnegative integer $k$, we have
\begin{equation}\label{id3}
\sum_{n=0}^k\binom{n+k}{n}2^n=(-1)^{k+1}-(-2)^{k+1}\sum_{n=0}^k\binom{n+k}{n}(-1)^n.
\end{equation}
\end{lemma}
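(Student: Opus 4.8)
The plan is to prove the identity \eqref{id3} by finding closed forms for both of the sums that appear in it. The crucial observation is that the partial sums $\sum_{n=0}^k\binom{n+k}{n}x^n$ arising here are (up to normalization) exactly the partial sums that occur in the Chaundy--Bullard identity, and that at the special values $x=2$ and $x=-1$ the relevant hypergeometric tails telescope or admit particularly clean evaluations. Concretely, I would first recall the elementary generating-function fact $\sum_{n\ge0}\binom{n+k}{n}x^n=(1-x)^{-k-1}$ and its finite truncation
$$
\sum_{n=0}^k\binom{n+k}{n}x^n=\f1{(1-x)^{k+1}}\l(1-(1-x)^{k+1}\sum_{n=0}^{k}\binom{n+k}{n}x^{k+1}\,[\cdots]\r),
$$
but since that route is messy I would instead work directly with the two sums as functions of $k$ and set up recurrences in $k$.

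A cleaner route: denote $A_k=\sum_{n=0}^k\binom{n+k}{n}2^n$ and $B_k=\sum_{n=0}^k\binom{n+k}{n}(-1)^n$. The claim \eqref{id3} is then $A_k=(-1)^{k+1}-(-2)^{k+1}B_k$, equivalently $A_k+(-2)^{k+1}B_k=(-1)^{k+1}$. I would prove this by induction on $k$. The base case $k=0$ gives $A_0=1$, $B_0=1$, and $1+(-2)\cdot1=-1=(-1)^1$, which checks out. For the inductive step I need the recurrences relating $A_{k+1}$ to $A_k$ and $B_{k+1}$ to $B_k$; these come from splitting $\binom{n+k+1}{n}=\binom{n+k}{n}+\binom{n+k}{n-1}$ and reindexing, which yields relations of the shape $A_{k+1}=2A_k+(\text{boundary terms involving }\binom{2k+2}{k+1})$, and similarly for $B_{k+1}$. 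The boundary terms for the two sums are governed by the same central binomial coefficient $\binom{2k+1}{k}$ or $\binom{2k+2}{k+1}$, and the point will be that when one forms the combination $A_{k+1}+(-2)^{k+2}B_{k+1}$ these boundary contributions cancel against each other, leaving exactly $-(-1)^{k+1}=(-1)^{k+2}$.

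The main obstacle I anticipate is bookkeeping the boundary terms correctly when passing from $k$ to $k+1$: the truncation index moves, so $A_{k+1}=\sum_{n=0}^{k+1}\binom{n+k+1}{n}2^n$ involves both a shift in the upper limit and a shift in the binomial parameter, and one must be careful to express everything back in terms of $A_k$ and $B_k$ plus explicit closed-form correction terms (which will be rational multiples of $\binom{2k+2}{k+1}$ times a power of $2$). An alternative that sidesteps this is to evaluate $A_k$ and $B_k$ separately in closed form first: one has the classical identities $\sum_{n=0}^k\binom{n+k}{n}(-1)^n=(-1)^k\binom{k-1}{k}$-type vanishing for the alternating one... more precisely $\sum_{n=0}^{k}\binom{k+n}{n}(-1/2)^n=\binom{2k}{k}/2^{2k}\cdot(\cdots)$; but since the cleanest statement is the target identity itself, I would present the induction. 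Either way, once \eqref{id3} is in hand it plugs into the Chaundy--Bullard transformation to reduce \eqref{den8^n} to \eqref{den-16^n}, as promised in the introduction.
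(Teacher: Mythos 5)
Your proposal is correct, but it proves the lemma by a genuinely different (and more self-contained) route than the paper. The paper treats \eqref{id3} as an immediate specialization of the Chaundy--Bullard identity
$1=(1-x)^{n+1}\sum_{k=0}^m\binom{n+k}{k}x^k+x^{m+1}\sum_{k=0}^n\binom{m+k}{k}(1-x)^k$:
taking $m=n=k$ and $x=2$ gives $1=(-1)^{k+1}A_k+2^{k+1}B_k$, and multiplying by $(-1)^{k+1}$ is exactly \eqref{id3}. Your induction also works, and the cancellation you anticipate really does occur. Carrying out the Pascal-rule computation gives the recurrences
\begin{equation*}
A_{k+1}=-A_k+3\cdot 2^{k+1}\binom{2k+1}{k+1},
\qquad
B_{k+1}=\tfrac12 B_k+\tfrac32(-1)^{k+1}\binom{2k+1}{k+1},
\end{equation*}
(note your guessed shape ``$A_{k+1}=2A_k+\cdots$'' has the wrong coefficient; after solving for $A_{k+1}$, which appears on both sides of the reindexed identity, the coefficient is $-1$, and for $B$ it is $\tfrac12$). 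Plugging these into $A_{k+1}+(-2)^{k+2}B_{k+1}$, the two multiples of $3\cdot2^{k+1}\binom{2k+1}{k+1}$ cancel exactly, leaving $-(A_k+(-2)^{k+1}B_k)=(-1)^{k+2}$, which closes the induction. So your plan is sound once the bookkeeping is done; what it buys is an elementary proof that does not require quoting Chaundy--Bullard, at the cost of a page of computation where the paper spends one line. One small correction of context: \eqref{id3} is not something that ``plugs into'' the Chaundy--Bullard transformation afterwards --- it \emph{is} the specialization of that identity, and it is then used (via Lemma \ref{keyid5}) to relate \eqref{den8^n} to \eqref{den-16^n}.
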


\begin{remark}
This is a corollary of the following identity due to Chaundy and Bullard \cite{CB}:
$$
1=(1-x)^{n+1}\sum_{k=0}^m\binom{n+k}{k}x^k+x^{m+1}\sum_{k=0}^n\binom{m+k}{k}(1-x)^k.
$$
\end{remark}

\begin{lemma}\label{keyid4}
For any positive integer $k$ we have
\begin{equation}\label{id4}
\sum_{n=0}^{k-1}(2n+k)\binom{-k}{n}=\f{(-1)^{k-1}k}{2}\binom{2k}{k}.
\end{equation}
\end{lemma}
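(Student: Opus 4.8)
The statement to prove is Lemma~\ref{keyid4}:
\[
\sum_{n=0}^{k-1}(2n+k)\binom{-k}{n}=\f{(-1)^{k-1}k}{2}\binom{2k}{k}.
\]

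\medskip

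\noindent\textbf{Proof proposal.}
The plan is to split the summand $2n+k$ into a piece that telescopes and a constant piece that can be summed by the binomial theorem (in the $-k$ upper-index form). First I would record the two elementary facts $\binom{-k}{n}=(-1)^n\binom{n+k-1}{n}$ and the ``hockey-stick''-type partial sum $\sum_{n=0}^{k-1}\binom{-k}{n}$. Using the standard finite identity $\sum_{n=0}^{m}(-1)^n\binom{N}{n}=(-1)^m\binom{N-1}{m}$ with $N=n+k-1$ handled appropriately—or more cleanly, using $\sum_{n=0}^{k-1}\binom{-k}{n}x^n$ specialized at $x=1$ via the relation $(1+x)^{-k}$ truncated—one gets a closed form for $\sum_{n=0}^{k-1}\binom{-k}{n}$. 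Concretely, from $\sum_{j=0}^{m}\binom{s}{j}=\binom{s+1}{m}\cdot\frac{?}{?}$ there is no clean closed form for general $s$, so instead I would use the telescoping identity $(n+1)\binom{-k}{n+1}+?$; the right device is that $\binom{-k}{n}$ satisfies $n\binom{-k}{n}=-k\binom{-k-1}{n-1}=-(k)\cdot(\text{shifted term})$, which lets $\sum n\binom{-k}{n}$ be rewritten as a single binomial via absorption.

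More efficiently, I would prove the identity by generating functions: let $F(x)=\sum_{n\ge0}\binom{-k}{n}x^n=(1+x)^{-k}$ and note $\sum_{n\ge0}(2n+k)\binom{-k}{n}x^n=2xF'(x)+kF(x)=-2kx(1+x)^{-k-1}+k(1+x)^{-k}=k(1+x)^{-k-1}\bigl((1+x)-2x\bigr)=k(1-x)(1+x)^{-k-1}$. Then $\sum_{n=0}^{k-1}(2n+k)\binom{-k}{n}$ is the sum of the first $k$ coefficients of $k(1-x)(1+x)^{-k-1}$, i.e. $k$ times the coefficient of $x^{k-1}$ in $\frac{1-x}{1-x}\cdot$... — more precisely, partial sums of coefficients of $G(x)$ equal coefficients of $G(x)/(1-x)$, so the answer is $k\,[x^{k-1}]\,\frac{(1-x)(1+x)^{-k-1}}{1-x}=k\,[x^{k-1}](1+x)^{-k-1}=k\binom{-k-1}{k-1}=k(-1)^{k-1}\binom{2k-1}{k-1}$. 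Finally $\binom{2k-1}{k-1}=\frac12\binom{2k}{k}$, giving exactly $\frac{(-1)^{k-1}k}{2}\binom{2k}{k}$.

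The main (very mild) obstacle is purely bookkeeping: the tail of the binomial series $(1+x)^{-k}$ is infinite, so I must justify that truncating the sum at $n=k-1$ corresponds exactly to taking $[x^{k-1}]$ of $G(x)/(1-x)$ and that no contributions from $n\ge k$ leak in — this is immediate because partial-sum-of-coefficients is the Cauchy product with $1/(1-x)=\sum x^m$, and we read off a single coefficient. Alternatively, if one prefers to avoid formal power series entirely, the identity \eqref{id4} can be proved by a one-line induction on $k$ (as was done for Lemma~\ref{keyid2}), checking $k=1$ and using the Pascal-type recurrence $\binom{-k-1}{n}=\binom{-k}{n}-\binom{-k-1}{n-1}$ to pass from $k$ to $k+1$; I would include whichever is shorter, most likely the generating-function computation since it also explains where the factor $\frac12\binom{2k}{k}$ comes from.
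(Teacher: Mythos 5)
Your generating-function argument is correct and reaches the same key quantity as the paper, but by a different route. The paper's proof is a four-line direct computation: it splits $2n+k=(n+k)+n$, applies the absorption identities $(n+k)\binom{-k}{n}=k\binom{-k-1}{n}$ and $n\binom{-k}{n}=-k\binom{-k-1}{n-1}$, and observes that the two resulting partial sums of $\binom{-k-1}{n}$ cancel except for the single term $k\binom{-k-1}{k-1}=\f{(-1)^{k-1}k}{2}\binom{2k}{k}$. Your version encodes exactly the same cancellation in power-series language: the identity $\sum_{n\gs0}(2n+k)\binom{-k}{n}x^n=k(1-x)(1+x)^{-k-1}$ combined with the fact that partial sums of coefficients correspond to division by $1-x$ gives $k\,[x^{k-1}](1+x)^{-k-1}=k\binom{-k-1}{k-1}$, the same single binomial coefficient. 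The factor $(1-x)$ cancelling against $1/(1-x)$ is the generating-function shadow of the paper's telescoping, so your method is arguably more illuminating, while the paper's is shorter and needs no formal power series. Two pieces of advice on the write-up: your first paragraph is exploratory scratch work containing unresolved placeholders and a false start (the remark that ``there is no clean closed form'' for $\sum_{n=0}^{k-1}\binom{-k}{n}$ is a dead end you then abandon); delete it entirely, since the second paragraph is self-contained. Also, the worry about ``contributions from $n\gs k$ leaking in'' is a non-issue and can be stated in one clause: $[x^{k-1}]$ of a Cauchy product only sees coefficients of index at most $k-1$.
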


\begin{proof}
Clearly,
\begin{align*}
&\sum_{n=0}^{k-1}(2n+k)\binom{-k}{n}=\sum_{n=0}^{k-1}(n+k)\binom{-k}{n}+\sum_{n=0}^{k-1}n\binom{-k}{n}\\
=&k\sum_{n=0}^{k-1}\binom{-k-1}{n}-k\sum_{n=0}^{k-2}\binom{-k-1}{n}=k\binom{-k-1}{k-1}=\f{(-1)^{k-1}k}{2}\binom{2k}{k}.
\end{align*}
This concludes the proof.
\end{proof}

\begin{lemma}\label{keyid5}
For any positive integer $k$, we have
\begin{equation}\label{id5}
\sum_{n=0}^{k-1}(-2)^n(n-k+1)\binom{-k}{n}=(-1)^{k+1}(3k-1)-(-2)^k\sum_{n=0}^{k-1}(2n-2k+1)\binom{-k}{n}.
\end{equation}
\end{lemma}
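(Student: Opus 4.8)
The plan is to derive \eqref{id5} from the two preceding identities \eqref{id3} and \eqref{id4} by elementary binomial bookkeeping, rather than by a direct induction. The starting point is the observation that $\binom{-k}{n}=(-1)^n\binom{n+k-1}{n}$, so that $(-2)^n\binom{-k}{n}=2^n\binom{n+k-1}{n}$ and $(-1)^n\binom{n+k-1}{n}=\binom{-k}{n}$. Hence, writing $A:=\sum_{n=0}^{k-1}(-2)^n\binom{-k}{n}$ and $B:=\sum_{n=0}^{k-1}\binom{-k}{n}$, identity \eqref{id3} with $k$ replaced by $k-1$ reads simply
\begin{equation*}
A=(-1)^{k}-(-2)^{k}B,
\end{equation*}
and this is the only way \eqref{id3} will be used.

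First I would treat the left-hand side of \eqref{id5}. Splitting $n-k+1=n-(k-1)$ shows that it equals $C-(k-1)A$, where $C:=\sum_{n=0}^{k-1}(-2)^n n\binom{-k}{n}$. Using $n\binom{-k}{n}=-k\binom{-k-1}{n-1}$ and shifting the summation index, $C=2k\,U$ with $U:=\sum_{m=0}^{k-2}(-2)^m\binom{-k-1}{m}$. To evaluate $U$ I would feed Pascal's rule $\binom{-k}{m}=\binom{-k-1}{m}+\binom{-k-1}{m-1}$ into the sum defining $A$: the part coming from $\binom{-k-1}{m-1}$ equals $-2U$ after an index shift, while the part coming from $\binom{-k-1}{m}$ equals $U+(-2)^{k-1}\binom{-k-1}{k-1}$, so that $A=-U+(-2)^{k-1}\binom{-k-1}{k-1}$. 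Since $\binom{-k-1}{k-1}=(-1)^{k-1}\binom{2k-1}{k-1}=\tfrac{(-1)^{k-1}}{2}\binom{2k}{k}$, this gives $U=2^{k-2}\binom{2k}{k}-A$, hence $C=2^{k-1}k\binom{2k}{k}-2kA$, and therefore the left-hand side of \eqref{id5} equals $2^{k-1}k\binom{2k}{k}-(3k-1)A$.

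For the right-hand side, I would use $2n-2k+1=(2n+k)-(3k-1)$, so that \eqref{id4} yields $\sum_{n=0}^{k-1}(2n-2k+1)\binom{-k}{n}=\tfrac{(-1)^{k-1}k}{2}\binom{2k}{k}-(3k-1)B$. Substituting this into the right-hand side of \eqref{id5} and using $(-2)^k\cdot\tfrac{(-1)^{k-1}}{2}=-2^{k-1}$, the right-hand side becomes $(-1)^{k+1}(3k-1)+2^{k-1}k\binom{2k}{k}+(3k-1)(-2)^kB$. Finally, inserting $A=(-1)^{k}-(-2)^{k}B$ into the expression $2^{k-1}k\binom{2k}{k}-(3k-1)A$ found for the left-hand side produces exactly the same quantity, which completes the proof. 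The only genuinely delicate point is the evaluation of $C$, equivalently of $U=\sum_{m=0}^{k-2}(-2)^m\binom{-k-1}{m}$: one cannot simply substitute $x=-2$ in the formal expansion $(1+x)^{-k}=\sum_{n\ge0}\binom{-k}{n}x^n$ since it diverges there, so the tail of the series must be accounted for — and the Pascal-rule telescoping above is precisely the device that folds that tail into the already-established identity \eqref{id3}. Everything else is routine arithmetic.
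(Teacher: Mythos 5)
Your proposal is correct and follows essentially the paper's route: both arguments reduce \eqref{id5} to Lemmas \ref{keyid3} and \ref{keyid4} via the splitting $(n-k+1)=n-(k-1)$, the absorption identity $n\binom{-k}{n}=-k\binom{-k-1}{n-1}$, and an index shift; the only (minor) difference is that you invoke \eqref{id3} once, at index $k-1$, and dispose of the sum $U=\sum_{m=0}^{k-2}(-2)^m\binom{-k-1}{m}$ by a Pascal-rule telescoping giving $U=2^{k-2}\binom{2k}{k}-A$, whereas the paper extends that sum to index $k$ and applies \eqref{id3} a second time. I verified all your intermediate identities, including the boundary case $k=1$, and the two sides match.
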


\begin{proof}
Note that $\binom{-k}{n}=(-1)^n\binom{n+k-1}{n}$. Thus by Lemma \ref{keyid3} we have
\begin{align*}
&\sum_{n=0}^{k-1}(-2)^n(n-k+1)\binom{-k}{n}=-k\sum_{n=1}^{k-1}(-2)^n\binom{-k-1}{n-1}-(k-1)\sum_{n=0}^{k-1}\binom{n+k-1}{n}2^n\\
=&2k\sum_{n=0}^k\binom{n+k}{n}2^n-(k-1)\sum_{n=0}^{k-1}\binom{n+k-1}{n}2^n-5k\binom{2k}{k}2^{k-1}\\
=&(-1)^{k+1}(3k-1)-2k(-2)^{k+1}\sum_{n=0}^k\binom{n+k}{n}(-1)^n+(k-1)(-2)^k\sum_{n=0}^{k-1}\binom{n+k-1}{n}(-1)^n\\
&-5k\binom{2k}{k}2^{k-1}\\
=&(-1)^{k+1}(3k-1)+(-2)^k\sum_{n=0}^{k-1}(4n+5k-1)\binom{-k}{n}+3k\binom{2k}{k}2^{k-1}.
\end{align*}
Now by Lemma \ref{keyid4} we have
\begin{align*}
\sum_{n=0}^{k-1}(4n+5k-1)\binom{-k}{n}=&\sum_{n=0}^{k-1}(6n+3k)\binom{-k}{n}-\sum_{n=0}^{k-1}(2n-2k+1)\binom{-k}{n}\\
=&\f{3(-1)^{k-1}k}{2}\binom{2k}{k}-\sum_{n=0}^{k-1}(2n-2k+1)\binom{-k}{n}.
\end{align*}

Combining the above, we finally obtain \eqref{id5}.
\end{proof}

\begin{lemma}\label{keyid6}
For any nonnegative integer $k$, we have the following identities:
\begin{gather}
\label{id6}\sum_{n=0}^k(-2)^n(n+k+1)\binom{k}{n}=(-1)^k(3k+1),\\
\label{id7}\sum_{n=0}^{k}(2n+2k+1)\binom{k}{n}=2^k(3k+1).
\end{gather}
\end{lemma}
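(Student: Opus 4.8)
The plan is to prove both identities by elementary manipulations, using only the binomial theorem and the absorption identity $n\binom{k}{n}=k\binom{k-1}{n-1}$; neither induction nor hypergeometric machinery is needed.

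For \eqref{id7} I would write $2n+2k+1=2n+(2k+1)$ and combine $\sum_{n=0}^k\binom{k}{n}=2^k$ with
$$
\sum_{n=0}^k n\binom{k}{n}=k\sum_{n=1}^k\binom{k-1}{n-1}=k\,2^{k-1},
$$
to obtain $\sum_{n=0}^k(2n+2k+1)\binom{k}{n}=2\cdot k\,2^{k-1}+(2k+1)\,2^k=(3k+1)\,2^k$. For \eqref{id6} I would similarly split $n+k+1=n+(k+1)$. The binomial theorem gives $\sum_{n=0}^k(-2)^n\binom{k}{n}=(1-2)^k=(-1)^k$, and the absorption identity together with a shift of the summation index gives
$$
\sum_{n=0}^k n(-2)^n\binom{k}{n}=k\sum_{n=1}^k(-2)^n\binom{k-1}{n-1}=-2k\sum_{m=0}^{k-1}(-2)^m\binom{k-1}{m}=-2k(-1)^{k-1}=2k(-1)^k,
$$
so that $\sum_{n=0}^k(-2)^n(n+k+1)\binom{k}{n}=2k(-1)^k+(k+1)(-1)^k=(3k+1)(-1)^k$.

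Since each step is a one-line computation, there is essentially no genuine obstacle here; the only points requiring a modicum of care are the index shift when applying $n\binom{k}{n}=k\binom{k-1}{n-1}$, and the degenerate case $k=0$, where the sums over $\binom{k-1}{\cdot}$ are empty and contribute $0$, and both sides of \eqref{id6} and \eqref{id7} reduce to $1$, as required.
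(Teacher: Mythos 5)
Your proposal is correct and follows essentially the same route as the paper: split the linear factor, apply the absorption identity $n\binom{k}{n}=k\binom{k-1}{n-1}$, shift the index, and finish with the binomial theorem. The paper only writes out \eqref{id6} explicitly and treats \eqref{id7} as analogous, exactly as you do.
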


\begin{proof}
These two identities can be easily deduced by binomial theorem. Here we just prove \eqref{id6} as an example. It is clear that
\begin{align*}
&\sum_{n=0}^k(-2)^n(n+k+1)\binom{k}{n}=(-1)^k(k+1)+k\sum_{n=1}^k(-2)^n\binom{k-1}{n-1}\\
=&(-1)^k(k+1)-2k\sum_{n=0}^{k-1}(-2)^n\binom{k-1}{n}=(-1)^k(3k+1).
\end{align*}
\end{proof}

\medskip
\noindent{\it Proof of \eqref{den8^n}}. By \eqref{suntransform} and Lemma \ref{keyid6}, we have
\begin{align}\label{key8^n}
&\sum_{n=0}^{p-1}\f{n+1}{8^n}\sum_{k=0}^{n}\binom{2k}{k}^2\binom{2n-2k}{n-k}^2=\sum_{k=0}^{p-1}\f{\binom{2k}{k}^3}{(-16)^k}\sum_{n=k}^{p-1}(-2)^n(n+1)\binom{k}{n-k}\notag\\
=&\sum_{k=0}^{p-1}\f{\binom{2k}{k}^3}{8^k}\sum_{n=0}^{p-1-k}(-2)^n(n+k+1)\binom{k}{n}\notag\\
=&\sum_{k=0}^{(p-1)/2}(3k+1)\f{\binom{2k}{k}^3}{(-8)^k}+\sum_{k=(p+1)/2}^{p-1}\f{\binom{2k}{k}^3}{8^k}\sum_{n=0}^{p-1-k}(-2)^n(n+k+1)\binom{k}{n}\notag\\
=&\sum_{k=0}^{(p-1)/2}(3k+1)\f{\binom{2k}{k}^3}{(-8)^k}+\sum_{k=1}^{(p-1)/2}\f{\binom{2p-2k}{p-k}^3}{8^{p-k}}\sum_{n=0}^{k-1}(-2)^n(n+p-k+1)\binom{p-k}{n}\notag\\
\eq&\sum_{k=0}^{(p-1)/2}(3k+1)\f{\binom{2k}{k}^3}{(-8)^k}+\sum_{k=1}^{(p-1)/2}\f{\binom{2p-2k}{p-k}^3}{8^{p-k}}\sum_{n=0}^{k-1}(-2)^n(n-k+1)\binom{-k}{n}\pmod{p^4},
\end{align}
where in the last step we noting that $\binom{2p-2k}{p-k}\eq0\pmod{p}$ for $k\in\{1,2,\ldots,(p-1)/2\}$.
Similarly, we obtain that
\begin{align}\label{key-16^n}
&\sum_{n=0}^{p-1}\f{2n+1}{(-16)^n}\sum_{k=0}^{n}\binom{2k}{k}^2\binom{2n-2k}{n-k}^2\notag\\
\eq&\sum_{k=0}^{(p-1)/2}(3k+1)\f{\binom{2k}{k}^3}{(-8)^k}+\sum_{k=1}^{(p-1)/2}\f{\binom{2p-2k}{p-k}^3}{(-16)^{p-k}}\sum_{n=0}^{k-1}(2n-2k+1)\binom{-k}{n}\pmod{p^4}.
\end{align}
Furthermore, with the help of Lemma \ref{keyid5} we have
\begin{align}\label{keydiff}
&\sum_{k=1}^{(p-1)/2}\f{\binom{2p-2k}{p-k}^3}{8^{p-k}}\sum_{n=0}^{k-1}(-2)^n(n-k+1)\binom{-k}{n}\notag\\
=&\sum_{k=1}^{(p-1)/2}\f{\binom{2p-2k}{p-k}^3}{8^{p-k}}\l((-1)^{k+1}(3k-1)-(-2)^k\sum_{n=0}^{k-1}(2n-2k+1)\binom{-k}{n}\r)\notag\\
\eq&-\sum_{k=(p+1)/2}^{p-1}(3k+1)\f{\binom{2k}{k}^3}{(-8)^k}+2\sum_{k=1}^{(p-1)/2}\f{\binom{2p-2k}{p-k}^3}{(-16)^{p-k}}\sum_{n=0}^{k-1}(2n-2k+1)\binom{-k}{n}\pmod{p^4}.
\end{align}
Combining \eqref{key8^n}--\eqref{keydiff} we arrive at
\begin{align}\label{key*}
&\sum_{n=0}^{p-1}\f{n+1}{8^n}\sum_{k=0}^{n}\binom{2k}{k}^2\binom{2n-2k}{n-k}^2\notag\\
\eq&2\sum_{n=0}^{p-1}\f{2n+1}{(-16)^n}\sum_{k=0}^{n}\binom{2k}{k}^2\binom{2n-2k}{n-k}^2-\sum_{k=0}^{p-1}(3k+1)\f{\binom{2k}{k}^3}{(-8)^k}\pmod{p^4}.
\end{align}

Substituting \eqref{CXHcon} and \eqref{den-16^n} into \eqref{key*} we obtain \eqref{den8^n}. This completes the proof.\qed

\begin{Acks}
This work is supported by the National Natural Science Foundation of China (Grant No. 11971222).
\end{Acks}


\begin{thebibliography}{99}
\bibitem{AAR} G. Andrews, R. Askey and R. Roy, {\it Special Functions}, Cambridge Univ. Press, Cambridge, 1999.
    
\bibitem{BS} R. Barman and N. Saikia, {\it Supercongruences for truncated hypergeometric series and $p$-adic Gamma function}, Math. Proc. Camb. Phil. Soc. {\bf168} (2020), 171--195.

\bibitem{CB} T.W. Chaundy, J.E. Bullard, {\it John Smith's problem}, Math. Gazette {\bf44} (1960), 253--260.

\bibitem{CXH} Y.G. Chen, X.Y. Xie and B. He, {\it On some congruences of certain binomial sums}, Ramanujan
J. {\bf40} (2016), 237--244.

\bibitem{G} H. W. Gould, {\it Combinatorial Identities}, Morgantown Printing and Binding Co., West Virginia, 1972.

\bibitem{GuiZ}  J. Guillera and W. Zudilin, {\it ``Divergent'' Ramanujan-type supercongruences},
Proc. Amer. Math. Soc. {\bf140} (2012), 765--777

\bibitem{Guo} V.J.W. Guo, {\it Some $q$-congruences with parameters}, Acta Arith. {\bf190} (2019), 381--393.

\bibitem{GPZ} V.J.W. Guo, H. Pan, Y. Zhang, {\it The Rodriguez-Villegas type congruences for truncated
$q$-hypergeometric functions}, J. Number Theory {\bf174} (2017), 358--368.

\bibitem{GZ} V.J.W. Guo, J. Zeng, {\it Some $q$-analogues of supercongruences of Rodriguez-Villegas}, J.
Number Theory {\bf145} (2014), 301--316.

\bibitem{Liu} J.-C. Liu, {\it Congruences for truncated hypergeometric series ${}_2F_1$}, Bull. Aust. Math.
Soc. {\bf96} (2017), 14--23.

\bibitem{Mao} G.-S. Mao, {\it Proof of two supercongruences by the Wilf-Zeilberger method}, preprint, arXiv:1911.01790.

\bibitem{MC} G.-S. Mao and Z.-J. Cao, {\it On two congruence conjectures}, C. R. Math. Acad. Sci. Paris {\bf357} (2019), 815--822.

\bibitem{MP} G.-S. Mao and H. Pan, {\it $p$-adic analogues of hypergeometric identities}, preprint, arXiv:1703.01215.

\bibitem {Mortenson03} E. Mortenson, {\it A supercongruence conjecture of Rodriguez-Villegas for a certain truncated
hypergeometric function}, J. Number Theory {\bf99} (2003), 139--147.

\bibitem{NP} H.-X. Ni, H. Pan, {\it On a conjectured q-congruence of Guo and Zeng}, Int. J. Number Theory
{\bf14} (2018), 1699--1707.

\bibitem{R} A. M. Robert, {\it A Course in $p$-adic Analysis}, Springer, New York, 2000.

\bibitem{RV} F. Rodriguez-Villegas, {\it Hypergeometric families of Calabi-Yau manifolds}, in:
Calabi-Yau Varieties and Mirror Symmetry (Toronto, ON, 2001), Fields Inst. Commun., {\bf38}, Amer. Math. Soc., Providence, RI, 2003, pp. 223--231.

\bibitem{SlA002426} N.J.A. Sloane, Sequence A002426 in OEIS, http://oeis.org/A002426.

\bibitem{SlA002895} N.J.A. Sloane, Sequence A002895 in OEIS, http://oeis.org/A002895.

\bibitem {SunZH14} Z.-H. Sun, {\it Generalized Legendre polynomials and related supercongruences}, J. Number Theory {\bf143} (2014) 293--319.

\bibitem{Sun11} Z.-W. Sun, {\it Super congruences and Euler numbers}, Sci. China Math. {\bf54} (2011), no. 12, 2509--2535.

\bibitem{Sun2012acta} Z.-W. Sun, {\it On sums involving products of three binomial coefficients}, Acta Arith. {\bf 156} (2012), no. 2, 123--141.

\bibitem{Sun2013} Z.-W. Sun, {\it Conjectures and results on $x^2\pmod{p^2}$ with $4p=x^2+dy^2$}, in: Number Theory and Related Area (eds., Y. Ouyang, C. Xing, F. Xu and P. Zhang),
　　Adv. Lect. Math. {\bf27}, Higher Education Press and International Press, Beijing-Boston, 2013, pp. 149--197.

\bibitem{Sun2014pi} Z.-W. Sun, {\it Some new series for $1/\pi$ and related congruences},
　　Nanjing Univ. J. Math. Biquarterly {\bf31} (2014), no.2, 150--164.

\bibitem{Sun2015pi} Z.-W. Sun, {\it A new series for $\pi^3$ and related congruences}, Internat. J. Math. {\bf26} (2015), no. 8, 1550055 (23 pages).

\bibitem{SunZW19} Z.-W. Sun, {\it Open conjectures on congruences}, Nanjing Univ. J. Math. Biquarterly {\bf 36} (2019), no. 1, 1--99.

\bibitem {vanHamme97} L. van Hamme, {\it Some conjectures concerning partial sums of generalized hypergeometric series}, $p$-Adic Functional Analysis (Nijmegen, 1996), Lecture Notes in Pure and Appl. Math., vol. 192, Dekker, New York, 1997, pp. 223--236.
\end{thebibliography}
\end{document}